\newcommand\chaptercontents{
{\global
\@topnum\z@
\@afterindentfalse
\if@twocolumn
\@restonecoltrue
\onecolumn
\else
\@restonecolfalse
\fi
\vspace*{10pt}
\noindent
{\small\bf Contents}\par
\vskip1em
\nobreak}
{\small
\@starttoc{toc}%
}\if@restonecol
\twocolumn
\fi}
\renewcommand*\l@section[2]{%
\ifnum \c@tocdepth >\z@
\addpenalty\@secpenalty
\setlength\@tempdima{2.5em}%
\begingroup
\parindent \z@
\rightskip
\@pnumwidth \parfillskip -\@pnumwidth
\leavevmode
\advance\leftskip\@tempdima
\hskip -\leftskip
#1\nobreak\leaderfill\nobreak
\hb@xt@\@pnumwidth{\hss #2}\par
\endgroup
\fi}
\renewcommand*\l@section{\@dottedtocline{1}{0.1em}{1.3em}}       
\renewcommand*\l@subsection{\@dottedtocline{2}{1.5em}{2em}}      
\renewcommand*\l@subsubsection{\@dottedtocline{3}{3.5em}{2.6em}} 
\newtheorem{observation}[theorem]{Observation} 
\newcommand{\ZFC}{{\sf ZFC}}
\newcommand{\ZF}{{\sf ZF}}
\newcommand{\Inacc}{{\sf Inacc}}
\def\<#1>{\langle#1\rangle}
\newcommand{\Con}{\mathop{{\rm Con}}}
\newcommand{\Levy}{L\'{e}vy}
\newcommand{\satisfies}{\models}
\newcommand{\Ord}{\mathop{{\rm Ord}}}
\newcommand{\df}{\it}
\newcommand{\HOD}{{\rm HOD}}
\newcommand{\GBC}{{\sf GBC}}
\newcommand{\of}{\subseteq}
\newcommand{\MM}{{\sf MM}}
\newcommand{\PFA}{{\sf PFA}}
\newcommand{\MC}{{\sf MC}}
\newcommand{\SC}{{\sf SC}}
\newcommand{\LC}{{\sf LC}}
\newcommand{\elesub}{\prec}
\newcommand{\intersect}{\cap}
\newcommand{\smalllt}{\mathrel{\mathchoice{\raise2pt\hbox{$\scriptstyle<$}}{\raise1pt\hbox{$\scriptstyle<$}}{\raise0pt\hbox{$\scriptscriptstyle<$}}{\scriptscriptstyle<}}}
\newcommand{\proves}{\vdash}
\begin{document}

\chapter{A multiverse perspective on the axiom of constructiblity}

\markboth{J. D. Hamkins}{Multiverse perspective on $V=L$}

\author{Joel David Hamkins\footnote{This article expands on an argument that I made during my talk at the Asian Initiative for Infinity: Workshop on Infinity and Truth, held July 25--29, 2011 at the Institute for Mathematical Sciences, National University of Singapore. This work was undertaken during my subsequent visit at NYU in Summer and Fall, 2011, and completed when I returned to CUNY. My research has been supported in part by NSF grant DMS-0800762, PSC-CUNY grant 64732-00-42 and Simons Foundation grant 209252. Commentary concerning this paper can be made at http://jdh.hamkins.org/multiverse-perspective-on-constructibility.}
}

\address{
Visiting Professor of Philosophy, New York University\\
Professor of Mathematics, The City University of New York\\
           The Graduate Center \& College of Staten Island\\
 jhamkins@gc.cuny.edu, http://jdh.hamkins.org}

\begin{abstract}
 I shall argue that the commonly held $V\neq L$ via maximize position, which rejects the axiom of constructibility $V=L$ on the basis that it is restrictive, implicitly takes a stand in the pluralist debate in the philosophy of set theory by presuming an absolute background concept of ordinal. The argument appears to lose its force, in contrast, on an upwardly extensible concept of set, in light of the various facts showing that models of set theory generally have extensions to models of $V=L$ inside larger set-theoretic universes.
\end{abstract}

\vspace*{12pt}


\section{Introduction}

Set theorists often argue against the axiom of constructibility $V=L$ on the basis that it is restrictive. Some argue that we have no reason to think that every set should be
constructible, or as Shelah puts it, ``Why the hell should it be true?'' \cite{Shelah2003:LogicalDreams}. To suppose that every set is constructible is seen as an artificial limitation on set-theoretic possibility, and perhaps it is a mistaken principle generally to suppose that all structure is definable. Furthermore, although $V=L$ settles many set-theoretic questions, it seems so often to settle them in the `wrong' way, without the elegant smoothness and unifying vision of competing theories, such as the situation of descriptive set theory under $V=L$ in comparison with that under projective determinacy. As a result, the constructible universe becomes a pathological land of counterexamples. That is bad news, but it could be overlooked, in my opinion, were it not for the much worse related news that $V=L$ is inconsistent with all the strongest large cardinal axioms. The boundary between those large cardinals that can exist in $L$ and those that cannot is the threshold of set-theoretic strength, the entryway to the upper realm of infinity. Since the $V=L$ hypothesis is inconsistent with the largest large cardinals, it blocks access to that realm, and this is perceived as intolerably limiting. This incompatibility, I believe, rather than any issue of definabilism or descriptive set-theoretic consequentialism, is the source of the most strident end-of-the-line deal-breaking objections to the axiom of constructibility. Set theorists simply cannot accept an axiom that prevents access to their best and strongest theories, the large cardinal hypotheses, which encapsulate their dreams of what our set theory can achieve and express.

Maddy \cite{Maddy1988:BelievingTheAxiomsI,Maddy1988:BelievingTheAxiomsII} articulates the grounds that mathematicians often use in reaching this conclusion, mentioning especially the {\it maximize} maxim, saying ``the view that $V=L$ contradicts {\it maximize} is widespread,'' citing Drake, Moschovakis and Scott. Steel argues that ``$V=L$ is restrictive, in that adopting it limits the interpretative power of our language.'' He points out that the large cardinal set theorist can still understand the $V=L$ believer by means of the translation $\varphi\mapsto\varphi^L$, but ``there is no translation in the other direction'' and that ``adding $V=L$\ldots just prevents us from asking as many questions!'' \cite{Steel2004:SlidesGenericAbsolutenessAndTheContinuumProblem}. At bottom, the axiom of constructibility appears to be incompatible with strength in our set theory, and since we would like to study this strength, we reject the axiom.

Let me refer to this general line of reasoning as the {\it $V\neq L$ via maximize} argument. The thesis of this article is that the $V\neq L$ via maximize argument relies on a singularist as opposed to pluralist stand on the question whether there is an absolute background concept of ordinal, that is, whether the ordinals can be viewed as forming a unique completed totality. The argument, therefore, implicitly takes sides in the universe versus multiverse debate, and I shall argue that without that stand, the $V\neq L$ via maximize argument lacks force.

In \cite{Maddy1998:V=LAndMaximize}, Maddy gives the $V\neq L$ via maximize argument sturdier legs, fleshing out a more detailed mathematical account of it, based on a methodology of mathematical naturalism and using the idea that maximization involves realizing more isomorphism types. She begins with the `crude version' of the argument:
\begin{quote}
 The idea is simply this: there are things like $0^\sharp$
 that are not in L. And not only is $0^\sharp$ not in L; its existence implies the existence of an isomorphism type that is not realized by anything in L.\ldots
 So it seems that $\ZFC+{V{=}L}$ is restrictive because it rules out the extra isomorphism types available from $\ZFC+\exists 0^\sharp$. \cite[p. 142--143]{Maddy1998:V=LAndMaximize}
\end{quote}
For the full-blown argument, she introduces the concept of a `fair interpretation' of one theory in another and the idea of one theory maximizing over another, leading eventually to a proposal of what it means for a theory to be `restrictive' (see the details in section \ref{Section.CriticismOfMaddy}), showing that $\ZFC+{V{=}L}$ and other theories are restrictive, as expected, in that sense.

My thesis in this article is that the general line of the $V\neq L$ via maximize argument presumes that we have an absolute background concept of ordinal, that the ordinals build up to form an absolute completed totality. Of course, many set-theorists do take that stand, particularly set theorists in the California school. The view that the ordinals form an absolute completed totality follows, of course, from the closely related view that there is a unique absolute background concept of set, by which the sets accumulate to form the entire set-theoretic universe $V$, in which every set-theoretic assertion has a definitive final truth value. Martin essentially argues for the equivalence of these two commitments in his categoricity argument \cite{Martin2001:MultipleUniversesOfSetsAndIndeterminateTruthValues}, where he argues for the uniqueness of the set-theoretic universe, an argument that is a modern-day version of Zermelo's categoricity argument with strong parallels in Isaacson's \cite{Isaacson2008:TheRealityOfMathematicsAndTheCaseOfSetTheory}. Martin's argument is founded on the idea of an absolute unending well-ordered sequence of set-formation stages, an `Absolute Infinity' as with Cantor. Although Martin admits that `it is of course possible to have doubts about the sharpness of the concept of wellordering,'' \cite[p. 8]{Martin2001:MultipleUniversesOfSetsAndIndeterminateTruthValues}, his argument presumes that the concept is sharp, just as I claim the $V\neq L$ via maximize argument does.

Let me briefly summarize the position I am defending in this article, which I shall describe more fully section in \ref{Section.UpwardlyExtensibleConceptOfSet}. On the upwardly extensible concept of set, one holds that any given concept of set or set-theoretic universe may always be extended to a much better one, with more sets and larger ordinals. Perhaps the original universe even becomes a mere countable set in the extended universe. The `class of all ordinals', on this view, makes sense only relative to a particular set-theoretic universe, for there is no expectation that these extensions cohere or converge. This multiverse perspective resonates with or even follows from a higher-order version of the maximize principle, where we maximize not merely which sets exist, but also which set-theoretic universes exist. Specifically, it would be limiting for one set-theoretic universe to have all the ordinals, when we can imagine another universe looking upon it as countable. Maximize thereby leads us to expect that every set-theoretic universe should not only have extensions, but extremely rich extensions, satisfying extremely strong theories, with a full range of large cardinals. Meanwhile, I shall argue, the mathematical results of section \ref{Section.V=LCompatibleWithStrength} lead naturally to the additional conclusion that every set-theoretic universe should also have extensions satisfying $V=L$. In particular, even if we have very strong large cardinal axioms in our current set-theoretic universe $V$, there is a much larger universe $V^+$ in which the former universe $V$ is a countable transitive set and the axiom of constructibility holds. This perspective, by accommodating both large cardinals and $V=L$ in the multiverse, appears to dissolve the principal thrust of the $V\neq L$ via maximize argument. The idea that $V=L$ is permanently incompatible with large cardinals evaporates when we can have large cardinals and reattain $V=L$ in a larger domain. In this way, $V=L$ no longer seems restrictive, and the upward extensible concept of set reveals how large cardinals and other strong theories, as well as $V=L$, may all be pervasive as one moves up in the multiverse.

\section{Some new problems with Maddy's proposal}\label{Section.CriticismOfMaddy}

Although my main argument is concerned only with the general line of the $V\neq L$ via maximize position, rather than with Maddy's much more specific account of it in \cite{Maddy1998:V=LAndMaximize}, before continuing with my main agument I would nevertheless like to mention a few problems with that specific proposal. 

To quickly summarize the details, she defines that a theory $T$ {\df shows $\varphi$ is an inner model} if $T$ proves that $\varphi$ defines a transitive class satisfying every instance of an axiom of \ZFC, and either $T$ proves every ordinal is in the class, or $T$ proves that there is an inaccessible cardinal $\kappa$, such that every ordinal less than $\kappa$ is in the class. Next, $\varphi$ is a {\df fair interpretation} of $T$ in $T'$, where $T$ extends \ZFC, if $T'$ shows $\varphi$ is an inner model and $T'$ proves every axiom of $T$ for this inner model. A theory $T'$ {\df maximizes} over $T$, if there is a fair interpretation $\varphi$ of $T$ in $T'$, and $T'$ proves that this inner model is not everything (let's assume $T'$ includes \ZFC). The theory $T'$ {\df properly maximizes} over $T$ if it maximizes over $T$, but not conversely. The theory $T'$ {\df strongly maximizes} over $T$ if the theories contradict one another, $T'$ maximizes over $T$ and no consistent extension $T''$ of $T$ properly maximizes over $T'$. All of this culminates in her final proposal, which is to say that a theory $T$ is {\df restrictive} if and only if there is a consistent theory $T'$ that strongly maximizes over it.

Let me begin with a quibble concerning the syntactic form of her definition of `shows $\varphi$ is an inner model', which in effect requires $T$ to settle the question of whether the inner model is to contain all ordinals or instead merely all ordinals up to an inaccessible cardinal. That is, she requires that either $T$ proves that $\varphi$ is in the first case or that $T$ proves that $\varphi$ is in the second case, rather than the weaker requirement that $T$ prove merely that $\varphi$ is in one of the two cases (so the distinction is between $(T\proves A)\vee(T\proves B)$ and $T\proves A\vee B$). To illustrate how this distinction plays out in her proposal, consider the theory $\Inacc=\ZFC+$`there are unboundedly many inaccessible cardinals' and the theory $T=\ZFC+$`either there is a Mahlo cardinal or there are unboundedly many inaccessible cardinals in $L$.' (I shall assume without further remark that these large cardinal theories and the others I mention are consistent.) Every model of $T$ has an inner model of \Inacc, either by truncating at the Mahlo cardinal, if there is one, or by going to $L$, if there isn't. Thus, we seem to have inner models of the form Maddy desires. Unfortunately, however, this is not good enough, and I claim that \Inacc\ is actually {\it not} fairly interpreted in $T$. To see this,  notice first that $T$ does not prove the existence of an inaccessible cardinal, since we can force over any model of \Inacc\ by destroying all inaccessible cardinals and thereby produce a model of $T$ having no inaccessible cardinals.\footnote{First force `$\Ord$ is not Mahlo' by adding a closed unbounded class $C$ of non-inaccessible cardinals---this forcing adds no new sets---and then perform Easton forcing to ensure $2^\gamma=\delta^+$ whenever $\gamma$ is regular and $\delta$ is the next element of $C$.} Consequently, if $T$ shows $\varphi$ is an inner model, it cannot be because of the second clause, which requires $T$ to prove the existence of an inaccessible cardinal. Thus, $T$ must prove $\varphi$ holds of all ordinals. But notice also that $T$ does not prove that there are unboundedly many inaccessible cardinals in $L$, since by truncation we can easily have a Mahlo cardinal in $L$ with no inaccessible cardinals above it. So $T$ also cannot prove that $\varphi$ defines a proper class model of \Inacc. Thus, \Inacc\ is not fairly interpreted in $T$, even though we might have wished it to be. This issue can be addressed, of course, by modifying the definition of shows-an-inner-model to subsume the disjunction under the provability sign, that is, by requiring instead that $T$ prove the disjunction that either $\varphi$ holds of all ordinals or that it holds of all ordinals up to an inaccessible cardinal. But let me leave this issue; it does not affect my later comments.

My next objection is that the fairly-interpreted-in relation is not transitive, whereas our pre-reflective ideas for an interpreted-in relation would call for it to be transitive. That is, I claim that it can happen that a first theory has a fair interpretation in a second, which has a fair interpretation in a third, but the first theory has no fair interpretation in the third. Here is a specific example showing the lack of transitivity:
$$\begin{array}{rcl}
 R&=&\ZFC+{V{=}L}+\text{there is no inaccessible cardinal}\\
 S&=&\ZFC+{V{=}L}+\text{there is an inaccessible cardinal}\\
 T&=&\ZFC+\omega_1\text{ is inaccessible in }L\\
\end{array}$$
The reader may easily verify that $R$ has a fair interpretation in $S$ by truncating the universe at the first inaccessible cardinal, and $S$ has a fair interpretation in $T$ by going to $L$. Furthermore, every model of $S$ has forcing extensions satisfying $T$, by the \Levy\ collapse. Meanwhile, I claim that $R$ has no fair interpretation in $T$. The reason is that $T$ is consistent with the lack of inaccessible cardinals, and so if $T$ shows $\varphi$ is an inner model, then in any model of $T$ having no inaccessible cardinals, this inner model must contain all the ordinals. In this case, in order for it to have $R^\varphi$, the inner model must be all of $L$, which according to $T$ has an inaccessible cardinal, and therefore doesn't satisfy $R$ after all. So $R$ is not fairly interpreted in $T$. The reader may construct many similar examples of intransitivity. The essence here is that the first theory is fairly interpreted in the second only by truncating, and the second is fairly interpreted in the third only by going to an inner model containing all the ordinals, but there is no way to interpret the first in the third except by doing both, which is not allowed in the definition if the truncation point is inaccessible only in the inner model and not in the larger universe.

The same example shows that the maximizing-over relation also is not transitive, since $T$ maximizes over $S$ and $S$ maximizes over $R$, by the fair interpretations mentioned above (note that these theories are mutually exclusive), but $T$ does not maximize over $R$, since $R$ has no fair interpretation in $T$. Similarly, the reader may verify that the example shows that the properly-maximizes-over and the strongly-maximizes-over relations also are not transitive.

Let me turn now to give a few additional examples of what Maddy calls a `false positive,' a theory deemed formally restrictive, which we do not find intuitively to be restrictive. As I see it, the main purpose of \cite{Maddy1998:V=LAndMaximize} is to give precise mathematical substance to the intuitive idea that some set theories seem restrictive in a way that others do not. We view $V=L$ and `there is a largest inaccessible cardinal' as limiting, while `there are unboundedly many inaccessible cardinals' seems open-ended and unrestrictive. Maddy presents some false positives, including an example of Steel's showing that $\ZFC+$`there is a measurable cardinal' is restrictive because it is strongly maximized by the theory $\ZFC+0^\dagger\text{ exists}+\forall\alpha<\omega_1\ L_\alpha[0^\dagger]\not\satisfies\ZFC$. L\"owe points out that ``this example can be generalized to at least every interesting theory in large cardinal form extending \ZFC. Thus, most theories are restrictive in a formal sense,'' \cite{Loewe2001:AFirstGlanceAtNonrestrictiveness} and he shows in \cite{Loewe2003:ASecondGlanceAtNonrestrictiveness} that \ZFC\ itself is formally restrictive because it is maximized by the theory $\ZF+$`every uncountable cardinal is singular'.

I would like to present examples of a different type, which involve what I believe to be more attractive maximizing theories that seem to avoid the counterarguments that have been made to the previous examples of false positives. First, consider again the theory \Inacc, asserting $\ZFC+$`there are unboundedly many inaccessible cardinals', a theory Maddy wants to regard as not restrictive. Let $T$ be the theory asserting $\ZFC+$`there are unboundedly many inaccessible cardinals in $L$, but no worldly cardinals in $V$.' A cardinal $\kappa$ is {\df worldly} when $V_\kappa\satisfies\ZFC$. Worldliness is a weakening of inaccessibility, since every inaccessible cardinal is worldly and in fact a limit of worldly cardinals; but meanwhile, worldly cardinals need not be regular, and the regular worldly cardinals are exactly the inaccessible cardinals. The worldly cardinals often serve as a substitute for inaccessible cardinals, allowing one to weaken the large cardinal commitment of a hypothesis. For example, one may carry out most uses of the Grothendieck universe axiom in category theory by using mere worldly cardinals in place of inaccessible cardinals. The theory $T$ is equiconsistent with \Inacc, since every model of \Inacc\ has a class forcing extension of $T$.\footnote{One first adds a closed unbounded class $C$ of cardinals containing no worldly cardinals (this forcing adds no new sets), and then performs Easton forcing so as to ensure that $2^\gamma=\delta^+$, where $\gamma$ is regular and $\delta$ is the next element of $C$. The result is a model of $T$, since all worldly cardinals have been killed off.} The theory \Inacc\ has a fair interpretation in $T$, by going to $L$, and as a result, $T$ maximizes over \Inacc. Meanwhile, I claim that no strengthening of \Inacc\ properly maximizes over $T$. To see this, suppose that $\Inacc^+$ contains \Inacc\ and shows $\varphi$ is an inner model $M$ satisfying $T$. If $M$ contains all the ordinals, then since \Inacc\ proves that the inaccessible cardinals are unbounded, $M$ would have to contain all those inaccessible cardinals, which would remain inaccessible in $M$ since inaccessibility is downward absolute, and therefore violate the claim of $T$ that there are no worldly cardinals. So by the definition of fair interpretation, therefore, $M$ would have to contain all the ordinals up to an inaccessible cardinal $\kappa$. But in this case, a Loweheim-Skolem argument shows that there is a closed unbounded set of $\gamma<\kappa$ with $V_\gamma^M\elesub V_\kappa^M$, and all such $\gamma$ would be worldly cardinals in $M$, violating $T$. Thus, \Inacc\ is strongly maximized by $T$, and so \Inacc\ is restrictive.

Let me improve the example to make it more attractive, provided that we read Maddy's definition of `fair interpretation' in a way that I believe she may have intended. The issue is that although Maddy refers to `truncation\ldots at inaccessible levels' and her definition is typically described by others using that phrase, nevertheless the particular way that she wrote her definition does not actually ensure that the truncation occurs at an inaccessible level. Specifically, in the truncation case, she writes that $T$ should prove that there is an inaccessible cardinal $\kappa$ for which $\forall\alpha(\alpha<\kappa\to\varphi(\alpha))$. But should this implication be a biconditional? Otherwise, of course, nothing prevents $\varphi$ from continuing past $\kappa$, and the definition would be more accurately described as `truncation at, or somewhere above, an inaccessible cardinal'. If one wants to allow truncation at non-inaccessible cardinals, why should we bother to insist that the height should exceed some inaccessible cardinal? Replacing this implication with a biconditional would indeed ensure that when the inner model arises by truncation, it does so by truncating at an inaccessible cardinal level. So let us modify the reading of `fair interpretation' so that truncation, if it occurs, does so at an inaccessible cardinal level. In this case, consider the theory \Inacc\ as before, and let $\MC^*$ be the theory $\ZFC+$`there is a measurable cardinal with no worldly cardinals above it'. By truncating at a measurable cardinal, we produce a model of \Inacc, and so $\MC^*$ offers a fair interpretation of \Inacc, and consequently $\MC^*$ maximizes over \Inacc. But no consistent strengthening of \Inacc\ can maximize over $\MC^*$, since if $V\satisfies\Inacc$ and $W$ is an inner model of $V$ satisfying $\MC^*$, then $W$ cannot contain all the ordinals of $V$, since the inaccessible cardinals would be worldly in $W$, and neither can the height of $W$ be inaccessible in $V$, since if $\kappa=W\intersect\Ord$ is inaccessible in $V$, then by a Lowenheim-Skolem argument there must be a closed unbounded set of $\gamma<\kappa$ such that $W_\gamma\elesub W$, and this will cause unboundedly many worldly cardinals in $W$, contrary to $\MC^*$. Thus, on the modified definition of fair interpretation, we conclude that $\MC^*$ strongly maximizes over \Inacc, and so \Inacc\ is restricted.

One may construct similar examples using the theory $\ZFC+$`there is a proper class of measurable cardinals', which is strongly maximized by $\SC^*=\ZFC+$`there is a supercompact cardinal with no worldly cardinals above it'. Truncating at the supercompact cardinal produces a model of the former theory, but no strengthening of the former theory can show $\SC^*$ in an inner model, since the unboundedly many measurable cardinals of the former theory prevent the showing of any proper class model of $\SC^*$, and the eventual lack of worldly cardinals in $\SC^*$ prevents it from being shown in any truncation at an inaccessible level of any model of \ZFC. A general format for these examples would be $\ZFC+$`there is a proper class of large cardinals of type \LC' and $T=\ZFC+$`there is an inaccessible limit of \LC\ cardinals, with no worldly cardinals above.' Such examples work for any large cardinal notion \LC\ that implies worldliness, is absolute to truncations at inaccessible levels and is consistent with a lack of worldly cardinals above. Almost all (but not all) of the standard large cardinal notions have these features.

Maddy has rejected some of the false positives on the grounds that the strongly maximizing theory involved is a `dud' theory, such as $\ZFC+\neg\Con(\ZFC)$. Are the theories above, $\MC^*$ and $\SC^*$, duds in this sense? It seems hard to argue that they are. For various reasons, set theorists often consider models of set theory with largest instances of large cardinals and no large cardinals above, often obtaining such models by truncation, in order to facilitate certain constructions. Indeed, the idea of truncating the universe at an inaccessible cardinal level lies at the heart of Maddy's definitions. But much of the value of that idea is already obtained when one truncates at the worldly cardinals instead. The theory $\MC^*$ can be obtained from any model of measurable cardinal by truncating at the least worldly cardinal above it, if there is one. But moreover, one needn't truncate at all: one can force $\MC^*$ over any model with a measurable cardinal, by very mild forcing. First, add a closed unbounded class $C$ of cardinals containing no worldly cardinal, and then perform Easton forcing so as to ensure in the forcing extension that $2^\gamma=\delta^+$, whenever $\gamma$ is regular and $\delta$ is the next element of $C$ above $\gamma$. The point is that this forcing will ensure that the continuum function $\gamma\mapsto 2^\gamma$ jumps over the former worldly cardinals, and so they will no longer be worldly (and no new worldly cardinals are created). If one starts this forcing above a measurable cardinal $\kappa$, then one preserves that measurable cardinal while killing all the worldly cardinals above it. (In the case of $\SC^*$, one should first make the supercompact cardinal Laver indestructible.) Because we can obtain $\MC^*$ and $\SC^*$ by moving from a large cardinal model to a forcing extension, where all the previous context and strength seems still available, these theories do not seem to be duds in any obvious way. Nevertheless, the theories $\MC^*$ and $\SC^*$ are restrictive, of course, in the intuitive sense that Maddy's project is concerned with. But to object that these theories are duds on the grounds that they are restrictive would be to give up the entire project; the point was to give precise substance to our notion of `restrictive', and it would beg the question to define that a theory is restrictive if it is strongly maximized by a theory that is not `restrictive.'


\section{Several ways in which V=L is compatible with strength}\label{Section.V=LCompatibleWithStrength}

In order to support my main thesis, I would like next to survey a series of mathematical results, most of them a part of set-theoretic folklore, which reveal various senses in which the axiom of constructibility $V=L$ is compatible with strength in set theory, particularly if one has in mind the possibility of moving from one universe of set theory to a much larger one.

First, there is the easy observation, expressed in observation \ref{Observation.VandLAgreeOnConsistency}, that $L$ and $V$ satisfy the same consistency assertions. For any
constructible theory $T$ in any language---and by a `constructible' theory I mean just that $T\in L$, which is true of any c.e.~theory, such as \ZFC\ plus any of the usual large
cardinal hypotheses---the constructible universe $L$ and $V$ agree on the consistency of $T$ because they have exactly the same proofs from $T$. It follows from this, by the
completeness theorem, that they also have models of exactly the same constructible theories.

\begin{observation}\label{Observation.VandLAgreeOnConsistency} 
 The constructible universe $L$ and $V$ agree on the consistency of any constructible theory. They have models of the same constructible theories.
\end{observation}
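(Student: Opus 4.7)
The plan is to observe that the proof of this is essentially a matter of absoluteness for the provability relation, together with the Gödel completeness theorem applied inside both $L$ and $V$. The key fact is that ``$p$ is a proof of $\varphi$ from $T$'' is a simple syntactic (indeed primitive recursive in the codes) predicate, and so its interpretation does not change as we pass from $L$ to $V$, provided that the relevant parameters---here, the theory $T$---lie in $L$.

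More precisely, for the first statement I would argue as follows. A theory $T$ with $T\in L$ is the same set in $L$ and in $V$. Proofs from $T$ are finite sequences of formulas satisfying a $\Delta_0$ syntactic check referring to $T$, so the formula ``$p$ is a proof of $\bot$ from $T$'' is absolute between $L$ and $V$. Hence ``there exists a proof of $\bot$ from $T$'' is a $\Sigma_1$ assertion with parameter $T\in L$, and such assertions are upward absolute from $L$ to $V$; conversely, any proof existing in $V$ is a hereditarily finite object, which is already a member of $L$. Therefore $L\satisfies\Con(T)$ if and only if $V\satisfies\Con(T)$.

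For the second statement, I would invoke the Gödel completeness theorem, which is a theorem of \ZFC\ and so holds in both $L$ and $V$. Inside either universe, a constructible theory $T$ has a model precisely when it is consistent. Combining this with the agreement on consistency established above, $L$ has a model of $T$ if and only if $V$ does. I emphasize that this does not say that $L$ and $V$ contain the same models of $T$---only that the class of constructible theories having some model is the same in each universe.

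The only mild subtlety, and the one I would be most careful about, is ensuring that the hypothesis ``$T$ is constructible'' is used correctly: one needs $T\in L$ so that $T$ names the same set in both universes, whereas in $V$ one might wish to speak of theories that are not in $L$. Once that hypothesis is in place, however, the argument is essentially pure absoluteness of syntax, and no obstacle remains.
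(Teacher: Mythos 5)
Your proposal is correct and matches the paper's own argument: the paper likewise observes that $L$ and $V$ have exactly the same proofs from any theory $T\in L$ (proofs being absolute finite objects), hence agree on $\Con(T)$, and then invokes the completeness theorem to conclude they have models of the same constructible theories. The only cosmetic difference is that you spell out the $\Sigma_1$-absoluteness bookkeeping explicitly, which the paper leaves implicit.
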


What this easy fact shows, therefore, is that while asserting $V=L$ we may continue to make all the same consistency assertions, such as $\Con(\ZFC+\exists\text{ measurable
cardinal})$, with exactly the same confidence that we might hope to do so in $V$, and we correspondingly find models of our favorite strong theories inside $L$. Perhaps a skeptic
worries that those models in $L$ are somehow defective? Perhaps we find only ill-founded models of our strong theory in $L$? Not at all, in light of the following theorem, a fact
that I found eye-opening when I first came to know it years ago.

\begin{theorem}\label{Theorem.VandLhaveSameTransitiveModels}
 The constructible universe $L$ and $V$ have transitive models of exactly the same constructible theories in the language of set theory.
\end{theorem}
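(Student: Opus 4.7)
The reverse direction is immediate: any transitive model of $T$ in $L$ is, as a set, also an element of $V$, and transitivity and satisfaction of each sentence of $T$ are absolute, so $V$ sees the same transitive model. All the work is in the forward direction: assuming $V$ has a transitive model of some constructible theory $T$, we must produce one inside $L$.

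The plan is to reduce this to an instance of Shoenfield absoluteness. First, by the downward L\"owenheim--Skolem theorem in $V$ followed by the Mostowski collapse, any transitive model of $T$ in $V$ gives rise to a \emph{countable} transitive model of $T$ in $V$, which can in turn be coded by a single real $r \in V$ (for instance, via a relation on $\omega$ isomorphic to the $\in$-relation of the model). Thus $V$ satisfies the statement $\exists r\,\Phi(r,t)$, where $t$ is a real coding (the G\"odel numbers of) the set of sentences of $T$, and $\Phi(r,t)$ asserts that $r$ codes a well-founded extensional binary relation on $\omega$ whose Mostowski collapse satisfies every sentence whose code appears in $t$.

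The second step is to check the complexity. Well-foundedness of a relation on $\omega$ is $\Pi^1_1$; the satisfaction clause is arithmetic in $r$ and $t$, because it quantifies only over natural numbers (codes of sentences) and invokes the standard arithmetical satisfaction predicate for set-theoretic formulas in a countable coded structure. Therefore $\exists r\,\Phi(r,t)$ is $\Sigma^1_2$ in the parameter $t$. Since $T \in L$, its set of G\"odel codes is a subset of $\omega$ lying in $L$, and so a real $t$ coding $T$ exists in $L$.

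The third step is to invoke Shoenfield absoluteness: $\Sigma^1_2$ formulas with real parameters from $L$ are absolute between $V$ and $L$. Hence $L$ also satisfies $\exists r\,\Phi(r,t)$, and decoding any witness $r \in L$ yields a transitive model of $T$ inside $L$. The only subtle point is making sure the satisfaction clause really is arithmetic in $r$ and $t$ rather than requiring external quantification over $T$; this is handled by coding $T$ as a subset of $\omega$ and letting the formula consult $t$ as an oracle, so that no additional set quantifier is needed. Once that is arranged, the rest is a standard application of Shoenfield.
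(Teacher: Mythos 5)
Your proposal is correct and follows essentially the same route as the paper: the paper's proof is a one-line observation that ``$T$ has a transitive model'' is $\Sigma^1_2(T)$ (``there is a real coding a well-founded structure satisfying $T$'') and hence absolute between $L$ and $V$ by Shoenfield, given $T\in L$. Your version merely fills in the standard details (L\"owenheim--Skolem plus Mostowski collapse to get a countable transitive model, the coding of $T$ as a real in $L$, and the complexity count), all of which the paper leaves implicit.
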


\begin{proof}
The assertion that a given theory $T$ has a transitive model has complexity $\Sigma^1_2(T)$, in the form ``there is a real coding a well founded structure satisfying $T$,'' and so it is absolute between $L$ and $V$ by the Shoenfield absoluteness theorem, provided the theory itself is in $L$.
\end{proof}

Consequently, one can have transitive models of extremely strong large cardinal theories without ever leaving $L$. For example, if there is a transitive model of the theory
$\ZFC+$``there is a proper class of Woodin cardinals,'' then there is such a transitive model inside $L$. The theorem has the following interesting consequence.

\begin{corollary}\label{Corollary.LandVSameSigma_1theory}(Levy-Shoenfield absoluteness theorem)
 In particular, $L$ and $V$ satisfy the same $\Sigma_1$ sentences, with parameters hereditarily countable in $L$. Indeed, $L_{\omega_1^L}$ and $V$ satisfy the same such sentences.
\end{corollary}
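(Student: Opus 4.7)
The plan is to reduce $\Sigma_1$ absoluteness to Theorem \ref{Theorem.VandLhaveSameTransitiveModels} by expressing ``$\varphi(a)$ holds'' in terms of the existence of a transitive model of a constructible theory that pins down the parameter $a$. The upward direction is immediate: writing $\varphi(x)$ as $\exists y\,\psi(y,x)$ with $\psi$ being $\Delta_0$, the formula $\psi$ is absolute for transitive classes, so $L_{\omega_1^L}\satisfies\varphi(a)$ yields $L\satisfies\varphi(a)$ and $V\satisfies\varphi(a)$ by upward $\Sigma_1$ absoluteness along $L_{\omega_1^L}\of L\of V$.

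For the nontrivial direction, suppose $V\satisfies\varphi(a)$ with $a\in H_{\omega_1}^L$. First I would reflect to a countable transitive witness: pick $b$ with $\psi(b,a)$, take a countable $M\elesub V_\theta$ containing $\mathrm{tc}(\{a\})\cup\{b\}$ for some large regular $\theta$, and form its Mostowski collapse $\bar M$. Since $\mathrm{tc}(\{a\})\of M$, the collapse fixes $a$, and $\bar M$ is a countable transitive set containing $a$ that satisfies a suitable finite fragment $T_0$ of $\ZFC$ together with $\varphi(a)$. Next I would package this as an assertion about transitive models of a theory lying in $L$: in the language of set theory expanded by a constant $\dot c_x$ for each $x\in\mathrm{tc}(\{a\})$, let $T$ consist of $T_0$, the quantifier-free atomic diagram of $(\mathrm{tc}(\{a\}),\in)$ using these constants, and the sentence $\varphi(\dot c_a)$. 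Because $a\in L$ and $\mathrm{tc}(\{a\})$ is countable in $L$, the theory $T$ is a set in $L$, hence constructible.

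The structure $\bar M$ witnesses that $T$ has a transitive model in $V$, so by Theorem \ref{Theorem.VandLhaveSameTransitiveModels} there is a transitive model $N\in L$ of $T$. The atomic diagram together with the transitivity of $N$ and the uniqueness of Mostowski collapse forces $\dot c_x^N=x$ for every $x\in\mathrm{tc}(\{a\})$; in particular $\dot c_a^N=a$, so $N\satisfies\varphi(a)$. A L\"owenheim--Skolem argument inside $L$ lets us take $N$ countable in $L$, whence $N\in L_{\omega_1^L}$, and upward $\Sigma_1$ absoluteness from $N$ to $L_{\omega_1^L}$ completes the proof.

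The main delicate point is keeping the parameter $a$ rigid under both the initial Mostowski collapse and the subsequent transfer of a transitive model between $V$ and $L$; the atomic-diagram clauses are what prevent any nontrivial re-interpretation of the constants once transitivity is imposed, ensuring that the model of $T$ produced in $L$ really witnesses $\varphi$ with the original parameter $a$ rather than some isomorphic copy.
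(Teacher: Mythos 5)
Your overall strategy is the paper's own: reduce the downward direction to Theorem \ref{Theorem.VandLhaveSameTransitiveModels} by forming a constructible theory whose transitive models are forced to interpret a name for the parameter $a$ as $a$ itself. But the step where you discharge your own ``main delicate point'' is exactly where the argument fails: the quantifier-free atomic diagram of $(\mathrm{tc}(\{a\}),\in)$, even in a transitive model $N$, does \emph{not} force $\dot c_x^N=x$. The diagram guarantees only that $x\mapsto\dot c_x^N$ is an injection preserving $\in$ and $\notin$ among the named points; it says nothing about what \emph{other} elements each $\dot c_x^N$ may have. Already for $a=\emptyset$ the theory amounts to $\dot c_\emptyset\notin\dot c_\emptyset$ together with $T_0$ and $\varphi(\dot c_\emptyset)$, and a transitive $N$ is free to interpret $\dot c_\emptyset$ as $\{\emptyset\}$ or anything else. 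Uniqueness of the Mostowski collapse tells you that the structure $(\{\dot c_x^N:x\in\mathrm{tc}(\{a\})\},\in)$ is isomorphic to $(\mathrm{tc}(\{a\}),\in)$, but since that image set need not be transitive in $N$, you cannot conclude that the isomorphism is the identity. So the transitive model of $T$ that Theorem \ref{Theorem.VandLhaveSameTransitiveModels} produces in $L$ may witness $\varphi$ only at some $\dot c_a^N\neq a$, and the downward direction collapses.

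The missing ingredient is the extra clause in the paper's theory: a name $\dot u$ for the transitive closure together with the assertions that $\dot u$ is transitive and that a named function $\dot e$ is a bijection of $\omega$ onto $\dot u$ with $\dot e(\check n)=\dot c_{e(n)}$ for a fixed enumeration $e$ of $\mathrm{tc}(\{a\})$ (equivalently, code $\mathrm{tc}(\{a\})$ by a real and let the model decode it). Surjectivity onto a set that is genuinely transitive --- and transitivity is absolute for the transitive $N$ --- is what rules out extra elements: every member of $\dot c_y^N$ lies in $\dot u^N$ and hence is some $\dot c_x^N$, so $(\dot u^N,\in)\cong(\mathrm{tc}(\{a\}),\in)$ with both sides transitive, and only now does Mostowski uniqueness yield $\dot c_x^N=x$. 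Your finite fragment $T_0$ of \ZFC\ contributes nothing toward this. With that clause added, the remainder of your argument --- the reflection to a countable transitive $\bar M$ in $V$, the appeal to Theorem \ref{Theorem.VandLhaveSameTransitiveModels}, and the L\"owenheim--Skolem step inside $L$ to land the witness in $L_{\omega_1^L}$ --- is sound and matches the paper's proof.
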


\begin{proof}
Since $L$ is a transitive class, it follows that $L$ is a $\Delta_0$-elementary substructure of $V$, and so $\Sigma_1$ truth easily goes upward from $L$ to $V$. Conversely, suppose
$V$ satisfies $\exists x\,\varphi(x,z)$, where $\varphi$ is $\Delta_0$ and $z$ is hereditarily countable in $L$. Thus, $V$ has a transitive model of the theory $\exists
x\varphi(x,z)$, together with the atomic diagram of the transitive closure $z$ and a bijection of it to $\omega$. By observation \ref{Observation.VandLAgreeOnConsistency}, it follows
that $L$ has such a model as well. But a transitive model of this theory in $L$ implies that there really is an $x\in L$ with $\varphi(x,z)$, as desired. Since the witness is
countable in $L$, we find the witness in $L_{\omega_1^L}$.
\end{proof}

One may conversely supply a direct proof of corollary \ref{Corollary.LandVSameSigma_1theory} via the Shoenfield absoluteness theorem and then view theorem
\ref{Theorem.VandLhaveSameTransitiveModels} as the consequence, because the assertion that there is a transitive model of a given theory in $L$ is $\Sigma_1$ assertion about that
theory.

I should like now to go further. Not only do $L$ and $V$ have transitive models of the same strong theories, but what is more, any given model of set theory can, in principle, be
continued to a model of $V=L$. Consider first the case of a countable transitive model $\<M,{\in}>$.

\begin{theorem}\label{Theorem.EveryCTMisInsideOmegaModelOfV=L}
 Every countable transitive set is a countable transitive set in the well-founded part of an $\omega$-model of $V=L$.
$$\begin{tikzpicture}[scale=.25]
 \draw (-10,0) --(-9,4) --(-11,4) --(-10,0);
 \draw (0,0) --(1,4) --(-1,4) --(0,0);
 \draw[->] (-8,3) to [out=10,in=170](-3,3);
 \draw (-2.5,6) --(0,0) --(2.5,6);
 \draw (2.5,6) to [out=70,in=0] (0,8) to [out=180,in=110](-2.5,6);
 \draw[dotted] (-2.5,6) --(2.5,6);
 \node at (0,4) (M) {$\bullet$};
 \node[anchor=north east] at (-11,4) {$M$};
 \node[anchor=south west] at (M) {$M$};
 \node[anchor=south west] at (0,8) {$L$};
\end{tikzpicture}$$
\end{theorem}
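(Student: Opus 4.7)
The plan is to apply Barwise compactness to an infinitary theory $T$ whose constants name each element of $M$ and whose infinitary axioms force those interpretations into the well-founded part of any model.

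First I would enlarge $M$ to $M^+ = M \cup \{M\} \cup \omega \cup \{\omega\}$, still countable and transitive, so that $M$ and $\omega$ are both elements; proving the theorem for $M^+$ suffices because then any $\omega$-model whose well-founded part contains $M^+$ automatically contains $M$ as a transitive element. In the language of set theory augmented by a constant $\bar a$ for each $a \in M^+$, let $T$ be the $L_{\omega_1,\omega}$-theory consisting of $\ZFC + V = L$; the full atomic diagram of $(M^+, \in)$, namely $\bar a \in \bar b$ for each $a \in b$ in $M^+$ and $\bar a \neq \bar b$ for distinct pairs; for each $a \in M^+$ the infinitary axiom $\forall x\bigl(x \in \bar a \to \bigvee_{b \in a} x = \bar b\bigr)$; and the first-order identity $\bar\omega = \omega$ using the usual definition of the finite ordinals. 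In any model $N$ of $T$, the infinitary clauses combined with extensionality force $\bar a^N = \{\bar b^N : b \in a\}$ for each $a$; by $\in$-induction on $M^+$, each $\bar a^N$ lies in the well-founded part $\mathrm{WF}(N)$, and the external Mostowski collapse of $\mathrm{WF}(N)$ sends $\bar a^N$ to $a$, placing $M$ itself as a transitive element. The infinitary clause at $a = \omega$ together with $\bar\omega = \omega$ forces $\omega^N = \omega$, making $N$ an $\omega$-model.

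The heart of the proof is showing $T$ is consistent. I would let $A$ be a countable admissible set with $M^+ \in A$ (say, the least such), observe that $T$ is $\Delta_1$-definable over $A$, and invoke Barwise compactness, which reduces consistency of $T$ to the consistency of every sub-theory $T_0 \in A$. A given $T_0 \in A$ mentions constants $\bar a$ only for $a$ in some $S \in A$, and by admissibility we may close $S$ under the $\in$-relation of $M^+$ to obtain a transitive subset still in $A$. A model of $T_0$ is then produced by realising $S$ inside a model of $\ZFC + V = L$: when $S \in L$ one takes $L$ itself with $\bar a$ interpreted as $a$, and in general one exploits the flexibility of non-well-founded models of $V = L$ via an $A$-recursive Henkin construction that internally accommodates $(S, \in)$.

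I expect the main obstacle to lie in this consistency verification, specifically when $S \not\in L$: no transitive class in $V$ houses such an $S$ inside a $V = L$-satisfying inner model, so one must genuinely build an ill-founded-above-$\omega$ model of $V = L$ harbouring an isomorphic copy of $(S, \in)$ as a set. Barwise's admissibility machinery is exactly what secures this, and carrying out the $A$-recursive Henkin construction while respecting the infinitary axioms is where the substantive technical work lies.
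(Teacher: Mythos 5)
Your approach is genuinely different from the paper's, which disposes of the theorem in three lines by absoluteness: the statement ``every countable transitive set lies in the well-founded part of an $\omega$-model of $V=L$'' is $\Pi^1_2$, and it is true in $L$ because every countable transitive set of $L$ is an element of some countable $L_\alpha$, which is itself a transitive model of $V=L$; Shoenfield then transfers it to $V$. Your infinitary-logic route, by contrast, has a gap precisely at the point you flag, and the gap is not a technicality to be finessed by ``admissibility machinery''---it is the whole theorem reappearing. Since $M^+$ is an element of your admissible set $A$ and the map $a\mapsto\sigma_a$ is $A$-recursive, $\Sigma$-replacement in $A$ puts the entire set $\{\sigma_a : a\in M^+\}$, and hence the entire theory $T$, inside $A$ as a single element. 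Barwise compactness is therefore vacuous here: among the subtheories $T_0\in A$ whose consistency you must verify is $T$ itself. Your proposed verification for the hard case ($S\notin L$, which after transitive closure is just $S=M^+$) asks for an $\omega$-model of $V=L$ housing a copy of $(S,\in)$ in its well-founded part---that is the statement being proved. No reduction has occurred. This is also why the paper's proof of Barwise's Theorem \ref{Theorem.BarwiseEndExtensionToV=L} has a different shape: there the starting object is a model of $\ZF$, so one can push a hypothetical refutation down into $L^M_{\omega_1}$ by \Levy--Shoenfield and contradict soundness using $L^M$ itself as a model of the relevant $\sigma_a$; a bare transitive set $M$ gives you no such inner model of $V=L$ containing its non-constructible elements, and that is exactly why the absoluteness argument is used instead.

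A secondary problem: by putting all of $\ZFC$ into $T$ you are attempting to prove something strictly stronger than the theorem. If every countable transitive set sat in the well-founded part of an $\omega$-model of full $\ZFC+{V{=}L}$, then in particular there would be an $\omega$-model of $\ZFC$, so the statement would imply $\Con(\ZFC)$ and could not be a theorem of $\ZFC$. The paper is careful on this point: the basic theorem delivers only a model of $V=L$ (concretely, something like a countable $L_\alpha$), the discussion afterwards upgrades this to $\ZFC^-$ or any finite fragment of $\ZFC$, and full $\ZFC$ in the end-extension is obtained only in theorem \ref{Theorem.EveryCTSinsidePDMofV=L} under the additional hypothesis that there are arbitrarily large $\lambda<\omega_1^L$ with $L_\lambda\satisfies\ZFC$. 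If you want to salvage an infinitary-logic proof, you should weaken $T$ to a finite fragment of $\ZFC$ plus $V=L$ and, more importantly, replace the compactness step by an argument that actually produces a consistency proof---at which point you will likely find yourself reproving Shoenfield absoluteness in disguise.
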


\begin{proof} The statement is true inside $L$, since every countable transitive set of $L$ is an element of some countable $L_\alpha$, which is transitive and satisfies $V=L$.
Further, the complexity of the assertion is $\Pi^1_2$, since it asserts that for every countable transitive set, there is another countable object satisfying a certain arithmetic
property in relation to it. Consequently, by the Shoenfield absoluteness theorem, the statement is true.
\end{proof}

Thus, every countable transitive set has an end-extension to a model of $V=L$ in which it is a set. In particular, if we have a countable transitive model $\<M,{\in}>\satisfies\ZFC$, and perhaps this is a model of some very strong large cardinal theory, such as a proper class of supercompact cardinals, then nevertheless there is a model $\<N,\in^N>\satisfies V=L$ which has $M$ as an element, in such a way that the membership relation of $\in^N$ agrees with $\in$ on the members of $M$. This implies that the ordinals of $N$ are well-founded at
least to the height of $M$, and so not only is $N$ an $\omega$-model, but it is an $\xi$-model where $\xi=\Ord^M$, and we may assume that the membership relation $\in^N$ of $N$ is
the standard relation $\in$ for sets of rank up to and far exceeding $\xi$. Furthermore, we may additionally arrange that the model satisfies $\ZFC^-$, or any desired finite fragment of
\ZFC, since this additional requirement is achievable in $L$ and the assertion that it is met still has complexity $\Pi^1_2$. If there are arbitrarily large $\lambda<\omega_1^L$ with $L_\lambda\satisfies\ZFC$, a hypothesis that follows from the existence of a single inaccessible cardinal (or merely from an uncountable transitive model of \ZF), then one can
similarly obtain \ZFC\ in the desired end-extension.

A model of set theory is {\df pointwise definable} if every object in the model is definable there without parameters. This implies $V=\HOD$, since in fact no ordinal parameters are
required, and one should view it as an extremely strong form of $V=\HOD$, although the pointwise definability property, since it implies that the model is countable, is not
first-order expressible. The main theorem of \cite{HamkinsLinetskyReitz:PointwiseDefinableModelsOfSetTheory} is that every countable model of \ZFC\ (and similarly for \GBC) has a
class forcing extension that is pointwise definable.

\begin{theorem}\label{Theorem.EveryCTSinsidePDMofV=L}
 If there are arbitrarily large $\lambda<\omega_1^L$ with $L_\lambda\satisfies\ZFC$, then every countable transitive set $M$ is a countable transitive set inside a structure $M^+$
 that is a pointwise-definable model of $\ZFC+{V{=}L}$, and $M^+$ is well founded as high in the countable ordinals as desired.
\end{theorem}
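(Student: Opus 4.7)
As in Theorem \ref{Theorem.EveryCTMisInsideOmegaModelOfV=L}, the strategy is to express the conclusion as a $\Pi^1_2$ sentence and appeal to the Shoenfield absoluteness theorem. For fixed (codes of) countable transitive $M$ and countable ordinal $\beta$, the existential witness is a real coding a countable $\in$-structure $M^+$ verifying four clauses: $M^+$ is pointwise definable in the pure $\in$-language (every element is the unique satisfier of some $\in$-formula), $M^+\satisfies\ZFC+V{=}L$, the membership relation of $M^+$ agrees with $V$'s on the transitive closure of $M$, and the well-founded part of $M^+$ reaches height at least $\beta$. These clauses are arithmetic or $\Sigma^1_1$ in the codes, and the outer universal quantifier over $M$ and $\beta$ yields a $\Pi^1_2$ sentence, absolute between $L$ and $V$ by Shoenfield.

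The bulk of the work is to verify the assertion in $L$. Fix countable transitive $M\in L$ and countable $\beta<\omega_1^L$ with the rank of $M$ below $\beta$. By hypothesis, pick a countable $\lambda>\beta$ with $L_\lambda\satisfies\ZFC$, so that $L_\beta\of L_\lambda$ and $M\in L_\beta$. The Skolem hull of $L_\beta\cup\{L_\beta\}$ in $L_\lambda$, after Mostowski collapse, is a countable $L_{\bar\lambda}\satisfies\ZFC+V{=}L$ containing $L_\beta$ transitively (so $M\in L_{\bar\lambda}$), but it is pointwise definable only from parameters drawn from $L_\beta$. To eliminate those parameters, I would carry out a Henkin completion, inside $L$, of $\ZFC+V{=}L$ in the pure $\in$-language using the $<_L$-least witnesses---so that every Henkin constant is $\emptyset$-definable as the $<_L$-least solution of its defining formula, making the term model pointwise definable in pure $\in$---and arrange additional axioms so that a $\emptyset$-definable ordinal $\dot\beta$ of the model is forced to sit inside the well-founded part, and so that each element of $L_\beta$ is named as its assigned set at a definable $<_L$-address below $\dot\beta$. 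The arbitrarily tall $L_{\lambda'}\satisfies\ZFC$ afforded by the hypothesis supplies both the ambient consistency and the nonstandard ordinal witnesses needed to make these axioms jointly satisfiable.

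The principal obstacle is producing pointwise definability in the pure $\in$-language while simultaneously preserving $V{=}L$ and anchoring the well-founded part through $\beta$. The pointwise-definability theorem of \cite{HamkinsLinetskyReitz:PointwiseDefinableModelsOfSetTheory} achieves pointwise definability by class forcing, but that forcing adds reals and so destroys $V{=}L$, and cannot be invoked as a black box. Routing through a Henkin completion that exploits the definable global $<_L$-ordering is what keeps pointwise-definability compatible with $V{=}L$; pinning down the standard initial segment through $\beta$ further requires the plentiful ambient structure supplied by the hypothesis. Once the construction is in place inside $L$, the $\Pi^1_2$-absoluteness of the statement extends the conclusion to $V$.
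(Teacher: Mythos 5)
Your outer frame---express the statement as a $\Pi^1_2$ assertion, verify it in $L$, and transfer to $V$ by Shoenfield---is exactly the paper's, and your complexity calculation is fine. The gap is in the only part that carries real content: the construction inside $L$. You correctly isolate the obstacle (the collapse of the Skolem hull of $L_\beta\cup\{L_\beta\}$ is pointwise definable only from parameters in $L_\beta$), but your proposed parameter-elimination does not work as described. First, a Henkin completion in finitary first-order logic cannot ``force'' an ordinal $\dot\beta$ to lie in the well-founded part: well-foundedness is not axiomatizable, so no collection of additional first-order axioms accomplishes this, and a plain Henkin term model will in general have nonstandard elements below $\dot\beta$; you would need an omitting-types or infinitary argument that you do not supply. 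Second, the clause ``each element of $L_\beta$ is named as its assigned set at a definable $<_L$-address below $\dot\beta$'' is circular: a typical $a\in L_\beta$ has no parameter-free definable $<_L$-rank, and which addresses end up definable is determined by the model you are trying to build, so it cannot be stipulated in the theory in advance; moreover, verifying the consistency of such an expanded theory is exactly as hard as producing the model. Third, even granting the construction, nothing guarantees the term model is well-founded at all, let alone an $L_{\bar\lambda}$.

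The paper's proof sidesteps all of this by never introducing the parameters. Code $M$ and $\beta$ by a single real $z\in L$. One first shows every real of $L$ lies in a pointwise definable $L_\alpha$: otherwise the $<_L$-least counterexample would be parameter-free definable in $L_{\omega_1^L}$, hence would lie in the collapse of the definable hull of $\emptyset$ there, which by condensation is a pointwise definable $L_\alpha$ containing and defining it---a contradiction. Then one uses the hypothesis to choose $\lambda<\omega_1^L$ with $L_\lambda\satisfies\ZFC$ having \emph{exactly} $\alpha$ many smaller $\ZFC$-levels $L_{\beta'}\satisfies\ZFC$; this makes $\alpha$, hence $L_\alpha$, hence $z$, hence $M$ and $\beta$, definable in $L_\lambda$ without parameters. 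The transitive collapse of the definable hull of $\emptyset$ in $L_\lambda$ is then a pointwise definable transitive $L_{\bar\lambda}\satisfies\ZFC+{V{=}L}$ containing $M$, and transitivity gives well-foundedness for free, so the $\Pi^1_2$ statement holds in $L$ and Shoenfield finishes the proof. Note that the hypothesis is used precisely to guarantee at least $\alpha+1$ many $\ZFC$-levels of $L$ below $\omega_1^L$, not to supply ``nonstandard ordinal witnesses.'' The device of counting $\ZFC$-levels to render the given real parameter-free definable is the missing idea in your proposal.
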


\begin{proof}
See \cite[Corollary 10]{HamkinsLinetskyReitz:PointwiseDefinableModelsOfSetTheory} for further details. First, note that every real $z$ in $L$ is in a pointwise definable $L_\alpha$,
since otherwise, the $L$-least counterexample $z$ would be definable in $L_{\omega_1}$ and hence in the the Skolem hull of $\emptyset$ in $L_{\omega_1}$, which collapses to a
pointwise definable $L_\alpha$ in which $z$ is definable, a contradiction. For any such $\alpha$, let $L_\lambda\satisfies\ZFC$ have exactly $\alpha$ many smaller $L_\beta$
satisfying \ZFC, and so $\alpha$ and hence also $z$ is definable in $L_\lambda$, whose Skolem hull of $\emptyset$ therefore collapses to a pointwise definable model of $\ZFC+{V{=}L}$
containing $z$. So the conclusion of the theorem is true in $L$. Since the complexity of this assertion is $\Pi^1_2$, it is therefore absolute to $V$ by the Shoenfield
absoluteness theorem.
\end{proof}

Theorems \ref{Theorem.EveryCTMisInsideOmegaModelOfV=L} and \ref{Theorem.EveryCTSinsidePDMofV=L} admit of some striking examples. Suppose for instance that $0^\sharp$ exists.
Considering it as a real, the argument shows that $0^\sharp$ exists inside a pointwise definable model of $\ZFC+{V{=}L}$, well-founded far beyond $\omega_1^L$. So we achieve the
bizarre situation in which the true $0^\sharp$ sits unrecognized, yet definable, inside a model of $V=L$ which is well-founded a long way. For a second example, consider a forcing
extension $V[g]$ by the forcing to collapse $\omega_1$ to $\omega$. The generic filter $g$ is coded by a real, and so in $V[g]$ there is a model $M\satisfies\ZFC+{V{=}L}$ with $g\in
M$ and $M$ well-founded beyond $\omega_1^V$. The model $M$ believes that the generic object $g$ is actually constructible, constructed at some
(necessarily nonstandard) stage beyond $\omega_1^V$. Surely these models are unusual.

The theme of these arguments goes back, of course, to an elegant theorem of Barwise, theorem \ref{Theorem.BarwiseEndExtensionToV=L}, asserting that every countable model of \ZF\ has
an end-extension to a model of $\ZFC+{V{=}L}$. In Barwise's theorem, the original model becomes merely a subset of the end-extension, rather than an element of the end-extension as
in theorems \ref{Theorem.EveryCTMisInsideOmegaModelOfV=L} and \ref{Theorem.EveryCTSinsidePDMofV=L}. By giving up on the goal of making the original universe itself a set in the
end-extension, Barwise seeks only to make the elements of the original universe constructible in the extension, and is thereby able to achieve the full theory of $\ZFC+{V{=}L}$ in
the end-extension, without the extra hypothesis as in theorem \ref{Theorem.EveryCTSinsidePDMofV=L}, which cannot be omitted there. Another important difference is that Barwise's
theorem \ref{Theorem.BarwiseEndExtensionToV=L} also applies to nonstandard models.

\begin{theorem}\label{Theorem.BarwiseEndExtensionToV=L} {\rm(Barwise \cite{Barwise1971:InfinitaryMethodsInTheModelTheoryOfSetTheoryLC69})}
 Every countable model of \ZF\ has an end-extension to a model of $\ZFC+{V{=}L}$.
$$\begin{tikzpicture}[scale=.25]
 \draw (-10,0) --(-9,4) --(-11,4) --(-10,0);
 \draw (-1,4) --(0,0) --(1,4);
 \draw (1,4) --(-1,4);
 \draw[->] (-8,3) to [out=10,in=170](-3,3);
 \draw (-2.5,6) --(0,0) --(2.5,6);
 \draw (2.5,6) to [out=70,in=0] (0,8) to [out=180,in=110](-2.5,6);
 \draw[dotted] (-2.5,6) --(2.5,6);
 \node[anchor=north east] at (-11,4) {$M$};
 \node[anchor=south west] at (0,8) {$L$};
\end{tikzpicture}$$
\end{theorem}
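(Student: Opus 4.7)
The plan is to apply Barwise's compactness theorem in the least admissible set $A = \mathbb{HYP}_M$ containing $M$ as an element; since $M$ is countable, so is $A$, and the theorem applies. In the infinitary language $\mathcal{L}_A$, augmented by a fresh constant $c_a$ for each $a \in M$, I would consider the theory $T$ whose axioms are: (i) the axioms of $\ZFC + V=L$; (ii) for each $a \in M$, the infinitary end-extension axiom
$$\forall x\,\Bigl(x \in c_a \leftrightarrow \bigvee_{b \in^M a} x = c_b\Bigr);$$
and (iii) distinctness axioms $c_a \neq c_b$ for $a \neq b$. A model $N$ of $T$ yields the desired end-extension: the map $a \mapsto c_a^N$ is injective, $\in^M$ transfers to $\in^N$ via this map (from axiom (ii), by taking $x = c_a$ for $a \in^M b$), and the left-to-right direction of (ii) blocks any new element of $N$ from appearing below some $c_a^N$, so the image of $M$ is an $\in^N$-initial segment of $N$.

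By Barwise compactness, consistency of $T$ reduces to consistency of each subtheory $T_0 \in A$. Any such $T_0$ mentions constants $c_a$ only for $a$ in some $A$-set $S \subseteq M$, which I may take to be $\in^M$-transitive by closing under $\in^M$ inside $A$. To produce a model of $T_0$, I leverage that $M \satisfies \ZF$: internally, $M$ has $L^M$ as a class model of $\ZFC + V=L$, and by reflection inside $M$ some $L_\alpha^M$ satisfies the finitely many first-order $\ZFC + V=L$ axioms in $T_0$. A Henkin-style construction performed inside $A$, driven by $M$'s internal notion of satisfaction in $L^M$, then assembles a (possibly ill-founded) model whose $\in$-diagram on the interpretations of the constants $c_a$ for $a \in S$ matches $(S, \in^M \upharpoonright S)$ exactly.

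The main obstacle is precisely this consistency step when $M$ is ill-founded: then $(S, \in^M \upharpoonright S)$ may itself be externally ill-founded, so one cannot collapse $S$ to a transitive set and simply place it inside an honest $L_\alpha$; one needs a genuinely ill-founded witness. This is exactly what the admissible-fragment machinery affords: the infinitary end-extension axioms encode the entire $\in^M$-diagram of $S$ without presupposing well-foundedness, and the internal consistency supplied by $M$'s own satisfaction of $\ZF$ together with its reflection principle transfers, via Barwise's theorem, to the existence of a genuine model of $T$, whence the desired end-extension of $M$ satisfying $\ZFC + V=L$.
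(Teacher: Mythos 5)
Your overall architecture---infinitary end-extension axioms $\sigma_a$ together with $\ZFC+{V{=}L}$, followed by a Barwise-style consistency argument---is the right one and matches the paper's, but there are two genuine gaps, one of which sits exactly where the real work of the theorem lies. The first concerns the choice of admissible set: if $A={\rm HYP}_M$ is the least admissible set with $M$ as an \emph{element}, then by $\Sigma$-replacement (applied to the map $a\mapsto\sigma_a$, which is $\Delta_1$ in the parameter $M$) the entire theory $T$ is itself an element of $A$. Barwise compactness then reduces the consistency of $T$ to the consistency of its $A$-finite subtheories, one of which is $T$ itself, so the reduction is vacuous. For transitive $M\satisfies\ZF$ the correct admissible set is $M$ itself, so that the subtheories to be handled are those that are sets \emph{in} $M$; for ill-founded $M$ even this fails, and Barwise's admissible cover---a different object from ${\rm HYP}_M$---is what is needed.

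Second, and more importantly, your verification of the consistency of the subtheories does not go through, and this is the crux of the proof. A subtheory $T_0$ contains $\sigma_a$ for $a$ ranging over some transitive $S\in M$, and these axioms force the constants $c_a$ to generate an isomorphic copy of $(S,{\in^M})$ sitting as an initial segment of any model of $T_0$. If $S\not\of L^M$---say $S$ contains a Cohen real over $L^M$, or $0^\sharp$---then no $L_\alpha^M$ can interpret these constants, and a ``Henkin construction driven by $M$'s internal satisfaction in $L^M$'' cannot produce a model whose diagram on the $c_a$ matches $(S,{\in^M})$ while also satisfying $V=L$; you give no argument for this step, and there is no direct one. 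The paper's proof (following Barwise) circumvents exactly this obstacle proof-theoretically: if $T+{V{=}L}$ had no model, there would be an infinitary refutation of it inside $M$; since the existence of such a refutation is a $\Sigma_1$ assertion, the Levy-Shoenfield theorem applied inside $M$ produces a refutation lying in $L_{\omega_1}^M$, which therefore uses only axioms $\sigma_a$ with $a\in L^M$---and for those axioms $L^M$ itself is a model, contradicting soundness. This restriction of the relevant constants to the constructible elements is the essential idea of the proof, and your proposal has no substitute for it. (A further untreated point: for $\omega$-nonstandard $M$ the standard $\ZFC$ axioms do not form a set from $M$'s point of view and the internal refutations may be nonstandard, which the paper handles by overspill to a nonstandard fragment $\ZFC^*$ true in some $L_\beta^M$ and by applying soundness internally to $M$ rather than externally.)
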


Let me briefly outline a proof in the case of a countable transitive model $M\satisfies\ZF$. For such an $M$, let $T$ be the theory \ZFC\ plus the infinitary assertions
$\sigma_a=\forall z\,(z\in \check a\iff\bigvee_{b\in a}z=\check b)$, for every $a\in M$, in the $L_{\omega_1,\omega}$ language of set theory with constant symbol $\check a$ for every element $a\in M$. The $\sigma_a$ assertions, which are expressible in $L_{\infty,\omega}$ logic in the sense of $M$, ensure that the models of $T$ are precisely (up to isomorphism)
the end-extensions of $M$ satisfying \ZFC. What we seek, therefore, is a model of the theory $T+{V{=}L}$. Suppose toward contradiction that there is none. I claim consequently that
there is a proof of a contradiction from $T+{V{=}L}$ in the infinitary deduction system for $L_{\infty,\omega}$ logic, with such infinitary rules as: from $\sigma_i$ for $i\in I$,
deduce $\bigwedge_i \sigma_i$. Furthermore, I claim that there is such a proof inside $M$. Suppose not. Then $M$ thinks that the theory $T+{V{=}L}$ is consistent in
$L_{\infty,\omega}$ logic. We may therefore carry out a Henkin construction over $M$ by building a new theory $T^+\of M$ extending $T+{V{=}L}$, with infinitely many new constant
symbols, adding one new sentence at a time, each involving only finitely many of the new constants, in such a way so as to ensure that (i) the extension at each stage remains
$M$-consistent; (ii) $T^+$ eventually includes any given $L_{\infty,\omega}$ sentence in $M$ or its negation, for sentences involving only finitely many of the new constants; (iii)
$T^+$ has the Henkin property in that it contains $\exists x\, \varphi(x,\vec c)\implies\varphi(d,\vec c)$, where $d$ is a new constant symbol used expressly for this formula; and
(iv) whenever a disjunct $\bigvee_i\sigma_i$ is in $T^+$, then also some particular $\sigma_i$ is in $T^+$. We may build such a $T^+$ in $\omega$ many steps just as in the classical
Henkin construction. If $N$ is the Henkin model derived from $T^+$, then an inductive argument shows that $N$ satisfies every sentence in $T^+$, and in particular, it is a model
of $T+{V{=}L}$, which contradicts our assumption that this theory had no model. So there must be a proof of a contradiction from $T+{V{=}L}$ in the deductive system for
$L_{\infty,\omega}$ logic inside $M$. Since the assertion that there is such a proof is $\Sigma_1$ assertion in the language of set theory, it follows by the Levy-Shoenfield theorem
(corollary \ref{Corollary.LandVSameSigma_1theory}) that there is such a proof inside $L^M$, and indeed, inside $L_{\omega_1}^M$. This proof is a countable object in $L^M$ and uses
the axioms $\sigma_a$ only for $a\in L_{\omega_1}^M$. But $L^M$ satisfies the theory $T+{V{=}L}$ and also $\sigma_a$ for all such $a$ and hence is a model of the theory from which we
had derived a contradiction. This violates soundness for the deduction system, and so $T+{V{=}L}$ has a model after all. Consequently, $M$ has an end-extension satisfying
$\ZFC+{V{=}L}$, as desired, and this completes the proof.

We may attain a stronger theorem, where every $a\in M$ becomes countable in the end-extension model, simply by adding the assertions `{\it $\check a$ is countable}' to the theory
$T$. The point is that ultimately the proof of a contradiction exists inside $L_{\omega_1}^M$, and so the model $L^M$ satisfies these additional assertions for the relevant $a$.
Similarly, we may also arrange that the end-extension model is pointwise definable, meaning that every element in it is definable without parameters. This is accomplished by adding
to $T$ the infinitary assertions $\forall z\,\bigvee_{\varphi}\forall x (\varphi(x)\iff x=z)$, taking the disjunct over all first-order formulas $\varphi$. These assertions ensure
that every $z$ is defined by a first-order formula, and the point is that the $\sigma_a$ arising in the proof can be taken not only from $L^M$, but also from amongst the definable
elements of $L^M$, since these constitute an elementary substructure of $L^M$.

Remarkably, the theorem is true even for nonstandard models $\mathcal{M}$, but the proof above requires modification, since the infinitary deductions of $M$ may not be well-founded
deductions, and this prevents the use of soundness to achieve the final contradiction. (One can internalize the contradiction to soundness, if $M$ should happen to have an
uncountable $L_\beta\satisfies\ZFC$, or even merely arbitrarily large such $\beta$ below $(\omega_1^L)^M$.) To achieve the general case, however, Barwise uses his compactness theorem \cite{Barwise1969:InfinitaryLogicAndAdmissibleSetsJSL} and the theory of admissible covers to replace the ill-founded model $\mathcal{M}$ with a closely related admissible set in
which one may find the desired well-founded deductions and ultimately carry out an essentially similar argument. I refer the reader to the accounts in
\cite{Barwise1971:InfinitaryMethodsInTheModelTheoryOfSetTheoryLC69} and in \cite[appendix]{Barwise1975:AdmissibleSetsStructures}.

It turns out, however, that one does not need this extra technology in the case of an $\omega$-nonstandard model $\mathcal{M}$ of \ZF, and so let me explain this case. Suppose that
$\mathcal{M}=\<M,{\in^{\mathcal{M}}}>$ is an $\omega$-nonstandard model of \ZF. Let $T$ again be the theory $\ZFC+\sigma_a$ for $a\in M$, where again $\sigma_a=\forall z\,(z\in
\check a\iff\bigvee_{b\in^{\mathcal{M}}a}z=\check b)$. Suppose there is no model of $T+{V{=}L}$. Consider the nonstandard theory $\ZFC^M$, which includes many nonstandard formulas.
By the reflection theorem, every finite collection of \ZFC\ axioms is true in arbitrarily large $L_\beta^{\mathcal{M}}$, and so by overspill there must be a nonstandard finite theory $\ZFC^*$ in $\mathcal{M}$ that includes every standard \ZFC\ axiom and which $\mathcal{M}$ believes to hold in some $L_\beta^{\mathcal{M}}$ for some uncountable ordinal $\beta$ in
$\mathcal{M}$. Let $T^*$ be the theory $\ZFC^*$ plus all the $\sigma_a$ for $a\in M$. This theory is $\Sigma_1$ definable in $\mathcal{M}$, and I claim that $\mathcal{M}$ must have a proof of a contradiction from $T^*+{V{=}L}$ in the infinitary logic $L_{\infty,\omega}^{\mathcal{M}}$. If not, then the same Henkin construction as above still works, working with
nonstandard formulas inside $\mathcal{M}$, and the corresponding Henkin model satisfies all the actual (well-founded) assertions in $T^*+{V{=}L}$, which includes all of $T+{V{=}L}$,
contradicting our preliminary assumption. So $\mathcal{M}$ has a proof of a contradiction from $T^*+{V{=}L}$. Since the assertion that there is such a proof is $\Sigma_1$, we again
find a proof in $L^M$ and even in $L_{\omega_1}^M$. But we may now appeal to the fact that $M$ thinks $L_\beta^M$ is a model of $\ZFC^*$ plus $\sigma_a$ for every $a\in
L_{\omega_1}^M$, which contradicts the soundness principle of the infinitary deduction system {\it inside} $M$. The point is that even though the deduction is nonstandard, this
doesn't matter since we are applying soundness not externally but inside $\mathcal{M}$. The contradiction shows that $T+{V{=}L}$ must have a model after all, and so $\mathcal{M}$ has an end-extension satisfying $\ZFC+{V{=}L}$, as desired. Furthermore, we may also ensure that every element of $\mathcal{M}$ becomes countable in the end-extension as before.

Let me conclude this section by mentioning another sense in which every countable model of set theory is compatible in principle with $V=L$.

\begin{theorem} (Hamkins \cite{Hamkins:EveryCountableModelOfSetTheoryEmbedsIntoItsOwnL})
 Every countable model of set theory $\<M,{\in^M}>$, including every transitive model, is isomorphic to a submodel of its own constructible universe $\<L^M,{\in^M}>$. In other words,  there is an embedding $j:M\to L^M$, which is elementary for quantifier-free assertions.
$$\hbox{\begin{tikzpicture}[xscale=.07,yscale=.3]
 \draw (-0,0) --(12,5) --(-12,5) --(0,0);
 \draw[dotted] (0,0) --(9,6);
 \draw[dotted] (0,0) --(-9,6);
 \node[anchor=south west] at (-1,5) {$L^M$};
 \draw (0,1) --(1,2) --(-1,2) --(0,1);
 \draw (1.4,2.4) --(2.1,3.1) --(-2.1,3.1) --(-1.4,2.4) --(1.4,2.4);
 \draw (2.5,3.5) --(3,4) --(-3,4) --(-2.5,3.5) --(2.5,3.5);
 \draw (3.5,4.5) --(4,5) --(-4,5) --(-3.5,4.5) --(3.5,4.5);
 \draw[->] (-8,3.33) to [out=190,in=150] (-10,2.5) to [out=-20,in=190] (-1.8,2.8);
 \node[anchor=north east] at (-9,2.5) {$j$};
 \node[anchor=north west] at (8,4) {$M$};
\end{tikzpicture}}\qquad\quad\raise 25pt\hbox{$x\in y\ \longleftrightarrow\ j(x)\in j(y)$}$$
\end{theorem}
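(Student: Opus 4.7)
The plan is to construct the embedding $j\colon M \to L^M$ by a forth construction of length $\omega$. Enumerate $M = \{a_n : n < \omega\}$ using the countability of $M$, and define $j(a_n) \in L^M$ at stage $n$, maintaining the invariant that the partial map so far is a quantifier-free isomorphism of induced substructures: for all $i \leq n$, $a_i \in^M a_n$ iff $j(a_i) \in^M j(a_n)$, $a_n \in^M a_i$ iff $j(a_n) \in^M j(a_i)$, and $a_i = a_n$ iff $j(a_i) = j(a_n)$.

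The entire construction reduces to a universality lemma for $(L^M, \in^M)$ viewed as a countable digraph: for any finite $F \subseteq L^M$ and any quantifier-free $1$-type $p(v)$ over $F$ in the language $\{\in\}$ that is consistent with $M$'s internal extensionality and well-foundedness, there is some $c \in L^M \setminus F$ realizing $p$. Granted the lemma, at stage $n$ one applies it with $F = \{j(a_0),\ldots,j(a_{n-1})\}$ and $p$ the type that $a_n$ realizes over $\{a_0,\ldots,a_{n-1}\}$ in $(M,\in^M)$, transported through the isomorphism built so far; the required $j(a_n)$ is then a witness, the invariant is preserved, and $j$ is total by countability of $M$. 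Quantifier-free elementarity follows immediately from the invariant.

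The main obstacle is the universality lemma itself. The natural approach splits $p$ into its lower part (which $b \in F$ should be members of $v$) and its upper part (which $b \in F$ should contain $v$). The lower part can be arranged by taking $c$ of the form $B \cup \{x\}$ for an auxiliary parameter $x \in L^M \setminus F$, and choosing $x$ of sufficiently high $L^M$-rank additionally kills all unwanted upper relations, handling the subcase where the upper part is empty. The genuinely delicate subcase is when one must place $c$ inside some prescribed $b \in F$ while keeping it outside the others: there one must locate, inside $L^M$, an element of each required $b$ having the prescribed lower pattern and simultaneously avoiding membership in the forbidden $b$'s. Establishing this — that $L^M$ is rich enough to realize any consistent finite $\in$-pattern — is the core technical step, which I would attempt to carry out by exploiting that $L^M \satisfies \ZFC$ internally, so that $L^M$ hosts, for each prescribed container, enough sets coded by its internal combinatorial infrastructure to permit a bookkeeping argument selecting mutually compatible witnesses.
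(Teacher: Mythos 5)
Your overall strategy---a forth-only construction of length $\omega$, reducing the theorem to an extension property for $(L^M,\in^M)$ viewed as a countable acyclic digraph---is in fact the strategy of the proof in the cited paper (the present article states the theorem without proof). But the universality lemma you reduce everything to is false as stated, and it fails precisely at the point you yourself flag as ``genuinely delicate.'' Take $F=\{\emptyset,b\}$ with $b=\{\emptyset\}$ computed in $L^M$, and let $p(v)$ assert $v\in b$, $v\neq\emptyset$, $\emptyset\notin v$, $b\notin v$. This pattern is perfectly consistent with extensionality and foundation in the abstract (realize it in $V$ with $b'=\{0,\{\{0\}\}\}$ and $v=\{\{0\}\}$), but no $c\in L^M\setminus F$ realizes it, since the concrete set $b$ has exactly one member, already in $F$. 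The same problem infects the construction itself: the quantifier-free type of $a_i$ over finitely many predecessors does not record how many members $a_i$ has in $M$, so nothing prevents the forth construction from choosing a ``thin'' set such as $\{\emptyset\}$ as some image $j(a_i)$; once that happens, the construction gets stuck as soon as a second element of $a_i$ appears in the enumeration. So the difficulty is not a technical subcase to be handled by bookkeeping inside $L^M$---the target structure $(L^M,\in^M)$ with arbitrary finite $F$ simply does not have the extension property you need.

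The missing idea is to constrain the range of $j$. One builds the embedding not into all of $L^M$ but into a distinguished substructure all of whose vertices are ``fat,'' in the sense that each one internally carries an unbounded supply of members realizing every compatible lower pattern over any finite subset of that substructure; and one replaces the informal clause ``consistent with well-foundedness'' by an explicit grading (by the rationals in the $\omega$-nonstandard case, by the ordinals in the well-founded case), so that the one-point extensions being realized always respect rank and the required containers always sit at strictly higher grade. In the cited paper this is organized around the countable random rational-graded acyclic digraph and its ordinal-graded analogues: one first proves the extension property for these homogeneous universal digraphs abstractly, then locates a copy of the appropriate one inside $L^M$ (inside the hereditarily finite sets of $L^M$ when $M$ is $\omega$-nonstandard, along the ordinal ranks when $M$ is well-founded, with the cases combined in general), and only then runs your forth argument entirely inside that copy. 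Your closing appeal to the fact that $L^M\satisfies\ZFC$ internally is gesturing in this direction, but without the fatness invariant on previously chosen images the induction does not close.
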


\noindent Another way to say this is that every countable model of set theory is a submodel of a model isomorphic to $L^M$. If we lived inside $M$, then by adding new sets and elements, our
universe could be transformed into a copy of the constructible universe $L^M$.

\section{An upwardly extensible concept of set}\label{Section.UpwardlyExtensibleConceptOfSet}

I would like now to explain how the mathematical facts identified in the previous section weaken support for the $V\neq L$ via maximize position, particularly for those set theorists
inclined toward a pluralist or multiverse conception of the subject.

To my way of thinking, theorem \ref{Theorem.VandLhaveSameTransitiveModels} already provides serious resistance to the $V\neq L$ via maximize argument, even without the multiverse
ideas I shall subsequently discuss. The point is simply that much of the force and content of large cardinal set theory, presumed lost under $V=L$, is nevertheless still provided
when the large cardinal theory is undertaken merely with countable transitive models, and theorem \ref{Theorem.VandLhaveSameTransitiveModels} shows that this can be done while
retaining $V=L$. We often regard a large cardinal argument or construction as important---such as Baumgartner's forcing of \PFA\ over a model with a supercompact cardinal---because
it helps us to understand a greater range for set-theoretic possibility. The fact that there is indeed an enormous range of set-theoretic possibility is the central discovery of the last half-century of set theory, and one wants a philosophical account of the phenomenon. The large cardinal arguments enlarge us by revealing the set-theoretic situations to
which we might aspire. Because of the Baumgartner argument, for example, we may freely assert $\ZFC+\PFA$ with the same gusto and confidence that we had for $\ZFC$ plus a
supercompact cardinal, and furthermore we gain detailed knowledge about how to transform a universe of the latter theory to one of the former and how these worlds are
related.\footnote{The converse question, however, whether we may transform models of \PFA\ to models of $\ZFC+\exists$ supercompact cardinal, remains open. Many set theorists have
conjectured that these theories are equiconsistent.} Modifications of that construction are what led us to worlds where \MM\ holds and $\MM^+$ and so on. From this perspective, a
large part of the value of large cardinal argument is supplied already by our ability to carry it out over a transitive model of \ZFC, rather than over the full universe $V$.

The observation that we gain genuine set-theoretic insights when working merely over countable transitive models is reinforced by the fact that the move to countable transitive
models is or at least was, for many set theorists, a traditional part of the official procedure by which the forcing technique was formalized. (Perhaps a more common contemporary
view is that this is an unnecessary pedagogical simplification, for one can formalize forcing over $V$ internally as a \ZFC\ construction.) Another supporting example is provided by
the inner model hypothesis of \cite{Friedman2006:InternalConsistencyAndIMH}, a maximality-type principle whose very formalization seems to require one to think of the universe as a
toy model, for the axiom is stated about $V$ as it exists as a countable transitive model in a larger universe. In short, much of what we hope to achieve with our strong set theories is already achieved merely by having transitive models of those theories, and theorem \ref{Theorem.VandLhaveSameTransitiveModels} shows that the existence of any and all such kind of transitive models is fully and equally consistent with our retaining $V=L$. Because of this, the $V\neq L$ via maximize argument begins to lose its force.

Nearly every set theorist entertaining some strong set-theoretic hypothesis $\psi$ is generally also willing to entertain the hypothesis that $\ZFC+\psi$ holds in a transitive model. To
be sure, the move from a hypothesis $\psi$ to the assertion `there is a transitive model of $\ZFC+\psi$' is strictly increasing in consistency strength, a definite step up, but a
small step. Just as philosophical logicians have often discussed the general principle that if you are willing to assert a theory $T$, then you are also or should also be willing to
assert that `$T$ is consistent,' in set theory we have the similar principle, that if you are willing to assert $T$, then you are or should be willing to assert that `there is a
transitive model of $T$'. What is more, such a principle amounts essentially to the mathematical content of the philosophical reflection arguments, such as in
\cite{Reinhardt1974:RemarksOnReflectionPrinciplesLargeCardinalsAndElementaryEmbeddings}, that are often used to justify large cardinal axioms. As a result, one has a kind of
translation that maps any strong set-theoretic hypothesis $\psi$ to an assertion `there is a transitive model of $\ZFC+\psi$', which has the same explanatory force in terms of
describing the range of set-theoretic possibility, but which because of the theorems of section \ref{Section.V=LCompatibleWithStrength} remains compatible with $V=L$.

This perspective appears to rebut Steel's claims, mentioned in the opening section of this article, that ``there is no translation'' from the large cardinal realm to the $V=L$
context and that ``adding $V=L$...prevents us from asking as many questions.'' Namely, the believer in $V=L$ seems fully able to converse meaningfully with any large cardinal set
theorist, simply by imagining that the large cardinal set theorist is currently living inside a countable transitive model. By applying the translation
 $$\psi\quad\longmapsto\quad\text{`there is a transitive model of $\ZFC+\psi$'},$$
the $V=L$ believer steps up in strength above the large cardinal set theorist, while retaining $V=L$ and while remaining fully able to analyze and carry out the large cardinal set
theorist's arguments and constructions inside that transitive model. Furthermore, if the large cardinal set theorist believes in her axiom because of the philosophical reflection principle arguments, then she agrees that set-theoretic truth is ultimately captured inside transitive sets, and so ultimately she agrees with the step up that the $V=L$ believer made, to put the large cardinal theory inside a transitive set. This simply reinforces the accuracy with which the $V=L$ believer has captured the situation.

Although the translation I am discussing is not a `fair interpretation' in the technical sense of \cite{Maddy1998:V=LAndMaximize}, as discussed in section \ref{Section.CriticismOfMaddy}, nevertheless it does seem to me to be a fair interpretation in a sense that matters, because it allows the $V=L$ believer to understand and appreciate the large cardinal set theorist's arguments and constructions.

Let me now go a bit further. My claim is that on the multiverse view as I describe it in \cite{Hamkins2012:TheSet-TheoreticalMultiverse} (see also
\cite{GitmanHamkins2010:NaturalModelOfMultiverseAxioms,Hamkins:TheMultiverse:ANaturalContext,Hamkins:IsTheDreamSolutionToTheContinuumHypothesisAttainable}), the nature of the full
outer multiverse of $V$ is revealed in part by the toy simulacrum of it that we find amongst the countable models of set theory. For all we know, our current set-theoretic universe
$V$ is merely a countable transitive set inside another much larger universe $V^+$, which looks upon $V$ as a mere toy. And so when we can prove that a certain behavior is pervasive
in the toy multiverse of any model of set theory, then we should expect to find this behavior also in the toy multiverse of $V^+$, which includes a meaningfully large part of the actual multiverse of $V$. In this way, we come to learn about the full multiverse of $V$ by undertaking a general study of the toy model multiverses. Just as every countable model has actual forcing extensions, we expect our full universe to have actual forcing extensions; just as every countable model can be end-extended to a model of $V=L$, we expect the full universe $V$ can be end-extended to a universe in which $V=L$ holds; and so on. How fortunate it is that the study of the connections between the countable models of set theory is a purely mathematical activity that can be carried out within our theory. This mathematical knowledge, such as the results mentioned in section \ref{Section.V=LCompatibleWithStrength} or the results of \cite{GitmanHamkins2010:NaturalModelOfMultiverseAxioms}, which show that the multiverse axioms of \cite{Hamkins2012:TheSet-TheoreticalMultiverse} are true amongst the countable computably-saturated models of set theory, in turn supports philosophical conclusions about the nature of the full set-theoretic multiverse.

The principle that pervasive features of the toy multiverses are evidence for the truth of those features in the full multiverse is a reflection principle similar in kind to those
that are often used to provide philosophical justification for large cardinals. Just as those reflection principles regard the full universe $V$ as fundamentally inaccessible, yet
reflected in various much smaller pieces of the universe, the principle here regards the full multiverse as fundamentally inaccessible, yet appearing in part locally as a toy
multiverse within a given universe. So our knowledge of what happens in the toy multiverses becomes evidence of what the full multiverse may be like.

Ultimately, the multiverse vision entails an upwardly extensible concept of set, where any current set-theoretic universe may be extended to a much larger, taller universe. The
current universe becomes a countable model inside a larger universe, which has still larger extensions, some with large cardinals, some without, some with the continuum hypothesis,
some without, some with $V=L$ and some without, in a series of further extensions continuing longer than we can imagine. Models that seem to have $0^\sharp$ are extended to larger
models where that version of $0^\sharp$ no longer works as $0^\sharp$, in light of the new ordinals. Any given set-theoretic situation is seen as fundamentally compatible with $V=L$, if one is willing to make the move to a better, taller universe. Every set, every universe of sets, becomes both countable and constructible, if we wait long enough. Thus, the
constructible universe $L$ becomes a {\it rewarder of the patient}, revealing hidden constructibility structure for any given mathematical object or universe, if one should only
extend the ordinals far enough beyond one's current set-theoretic universe. This perspective turns the $V\neq L$ via maximize argument on its head, for by maximizing the
ordinals, we seem able to recover $V=L$ as often as we like, extending our current universe to larger and taller universes in diverse ways, attaining $V=L$ and destroying it in an
on-again, off-again pattern, upward densely in the set-theoretic multiverse, as the ordinals build eternally upward, eventually exceeding any particular conception of them.


\end{document}